\documentclass[oneside,british]{amsart}
\usepackage[T1]{fontenc}
\usepackage[utf8]{inputenc}
\usepackage{mathrsfs}
\usepackage{amstext}
\usepackage{amsthm}
\usepackage{amssymb}

\makeatletter
\numberwithin{equation}{section}
\numberwithin{figure}{section}
\theoremstyle{plain}
\newtheorem{thm}{\protect\theoremname}[section]
\theoremstyle{plain}
\newtheorem{lem}[thm]{\protect\lemmaname}

\makeatother

\usepackage{babel}
\providecommand{\lemmaname}{Lemma}
\providecommand{\theoremname}{Theorem}

\begin{document}
\global\long\def\I{\mathbb{I}}%
\global\long\def\p{\mathbb{P}}%
\global\long\def\E{\mathbb{E}}%
 
\subjclass[2000]{Primary 60H10, 60J65, 60J60; Secondary 60H20, 60J50, 60J55}
\keywords{sticky Brownian motion, Feller boundary condition, stochastic differential
equation, time change, local time;}
\address{University of Hamburg, Hamburg, Germany}
\email{leylajafarova08@gmail.com}
\thanks{The author is grateful to Vitalii Konarovskyi for his valuable guidance
and helpful discussions.}
\title{Sticky Brownian Motion with a Locally Finite Atomic Sticky Measure}
\author{L.A. Jafarova}
\begin{abstract}
We study sticky Brownian motion on $\mathbb{R}$ with a set of sticky
points $A$. In contrast to the classical case of a single sticky
point, the set $A$ may have a more complicated structure and, in
particular, may have accumulation points. The stickiness at the points
of $A$ is described by a locally finite purely atomic measure $\nu$.

We extend the classical time-change approach for sticky Brownian motion
to this setting and use it to construct a weak solution of the corresponding
stochastic differential equation with a local-time condition. We then
prove uniqueness in law of the solution. 
\end{abstract}

\maketitle

\section{Introduction}

Boundary behaviour is fundamental in the study of diffusion processes.
Feller \cite{key-8} (see also \cite{key-9,key-10}) introduced sticky
boundaries while characterizing infinitesimal generators of strong
Markov processes on $[0,\infty)$ that behave as Brownian motion on
$(0,\infty)$. Using the Hille--Yosida approach, he obtained the
boundary condition 
\begin{equation}
f'(0+)=\frac{1}{2\mu}f''(0+),\qquad\mu\in(0,\infty),\label{eq:-12}
\end{equation}
which characterizes a sticky boundary. A probabilistic construction
was later provided by Itô and McKean \cite{key-11}, who constructed
sticky Brownian motion as a time change of reflecting Brownian motion.
More precisely, for a standard Brownian motion $B$, define the additive
functional 
\[
A_{t}=t+\frac{1}{\mu}L_{t}^{0}(B)
\]
and its inverse 
\[
T_{t}:=A_{t}^{-1},\qquad t\geq0,
\]
where $L^{0}(B)$ denotes the local time of $B$ at $0$. Then the
sticky process is given by 
\[
X_{t}=|B_{T_{t}}|.
\]

The local-time term increases the time-change functional whenever
the process visits the boundary, causing the inverse clock to slow
down and hence producing a positive occupation time at the boundary.

The Itô--McKean time-change construction can also be used for a sticky
point located in the interior of the state space. In this case one
starts with a standard Brownian motion $B$ instead of reflecting
Brownian motion. For a sticky point at $0$, the time change is based
on the additive functional
\[
A_{t}=t+\frac{1}{\mu}L_{t}^{0}(B),
\]
and the time-changed process is given by 
\[
X_{t}=B_{T_{t}},
\]
where $T$ is the inverse of $A$. The corresponding condition for
the generator can be written as
\begin{equation}
f'(0+)-f'(0-)=\frac{1}{\mu}f''(0\pm).\label{eq:-13}
\end{equation}

A systematic treatment of stochastic differential equations with sticky
behaviour was developed by Engelbert and Peskir \cite{key-7}. Focusing
on Brownian motion on $\mathbb{R}$ sticky at the origin, they formulated
the process through the system 
\begin{align*}
 & dX_{t}=\I_{\{X_{t}\neq0\}}\,dB_{t},\\
 & \I_{\{X_{t}=0\}}dt=\frac{1}{\mu}dL_{t}^{0}(X),
\end{align*}
where $X_{0}=x\in\mathbb{R}$ and $\mu\in(0,\infty)$ is a fixed constant.
Building on the Itô--McKean time-change construction, they established
existence and uniqueness in law for solutions of this system, providing
an SDE formulation of sticky Brownian motion.

A generalization of equations of this type was studied by Fattler,
Grothaus, and Steil in \cite{key-6}, where they investigated multidimensional
sticky diffusions using Dirichlet form methods. Under suitable assumptions
on a function $\rho$, they established existence and uniqueness in
law of solutions to 
\begin{align}
 & dX_{t}^{i}=\I_{\mathbb{R}^{d}\setminus A}(X_{t})\sqrt{2}\,dB_{t}^{i}+\I_{\mathbb{R}^{d}\setminus A}(X_{t})\partial_{x_{i}}\log\rho(X_{t})\,dt,\label{eq:-46}\\
 & X_{0}^{i}=x_{i},\nonumber 
\end{align}
for $i\in\{1,\ldots,d\}$ with given invariant distribution, where
$(B_{t}^{i})_{t\geq0}$ are independent one-dimensional standard Brownian
motions and $A$ is a closed set of $d$-dimensional Lebesgue measure
zero. In the one-dimensional case, the authors considered a countable
set $A$ such that every compact subset of $\mathbb{R}$ intersects
$A$ in finitely many points.

Motivated by these developments, we consider sticky Brownian motion
on $\mathbb{R}$ with a more general set $A$ of sticky points. To
each $c\in A$ we associate a constant $\mu_{c}>0$ and consider the
purely atomic measure 
\[
\nu=\sum_{c\in A}\frac{1}{\mu_{c}}\delta_{c}.
\]
Our main assumption is that $\nu$ is locally finite, that is, 
\[
\sum_{c\in A\cap K}\frac{1}{\mu_{c}}<\infty
\]
for every compact set $K\subset\mathbb{R}$. In particular, the set
$A$ itself does not have to be locally finite and may have accumulation
points.

The measure $\nu$ appears naturally in the Ito-McKean time-change
construction. For a Brownian motion $B$, the corresponding local-time
additive functional is 
\[
\int_{\mathbb{R}}L_{t}^{x}(B)\nu(dx)=\sum_{c\in A}\frac{1}{\mu_{c}}L_{t}^{c}(B).
\]
Since $\nu$ is locally finite and $x\mapsto L_{t}^{x}(B)$ has compact
support for every fixed $t$, this integral is well defined.

It is useful to compare the atomic case with the case where $\nu$
is absolutely continuous with respect to Lebesgue measure. If 
\[
\nu(dx)=\rho(x)dx,
\]
then the occupation-time formula gives 
\[
\int_{\mathbb{R}}L_{t}^{x}(B)\nu(dx)=\int_{0}^{t}\rho(B_{s})\,ds.
\]
Thus, in the absolutely continuous case the local-time functional
reduces to an ordinary time integral, and the corresponding time change
leads to a diffusion described by a classical SDE. In the purely atomic
case considered here, the local-time terms remain in the time change
and produce sticky behaviour at the points of $A$.

We study the following system: 
\begin{align}
 & dX_{t}=\I_{\{X_{t}\notin A\}}\,dB_{t},\nonumber \\
 & X_{0}=x_{0},\label{eq:-3}\\
 & \frac{1}{\mu_{c}}L_{t}^{c}(X)=\int_{0}^{t}\I_{\{X_{s}=c\}}\,ds,\qquad c\in A.\nonumber 
\end{align}
Here $L_{t}^{c}(X)$ denotes the local time of $X$ at the point $c$.
The first equation describes the Brownian dynamics away from the set
$A$, while the second equation describes the time spent by the process
at each sticky point.

A weak solution of (\ref{eq:-3}) is a pair $(X,B)$ of adapted continuous
processes defined on a filtered probability space $(\Omega,\mathscr{F},\mathbb{F},\mathbb{P})$,
where $B$ is an $\mathbb{F}$-Brownian motion, such that
\begin{align*}
 & X_{t}=x_{0}+\int_{0}^{t}\I_{\{X_{s}\notin A\}}dB_{s},\\
 & \frac{1}{\mu_{c}}L_{t}^{c}(X)=\int_{0}^{t}\I_{\{X_{s}=c\}}ds,\qquad c\in A.
\end{align*}

The main result of this work is the following theorem.
\begin{thm}
\label{thm:main}Let $A\subset\mathbb{R}$ be countable and let $(\mu_{c})_{c\in A}$
be a family of positive constants. Assume that the measure
\[
\nu=\sum_{c\in A}\frac{1}{\mu_{c}}\delta_{c}
\]
is locally finite. Then, for every $x_{0}\in\mathbb{R}$, there exists
a weak solution to (\ref{eq:-3}). Moreover, the solution is unique
in law.
\end{thm}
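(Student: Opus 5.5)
The plan is to follow the Itô--McKean / Engelbert--Peskir scheme. Existence comes from a time change of a Brownian motion. Uniqueness comes from inverting that time change for an arbitrary solution and showing that the driving process is a Brownian motion whose functional determines $X$.

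\textbf{Existence.} Let $W$ be a standard Brownian motion started at $x_0$, and let $L_t^x(W)$ be a bicontinuous version of its local times. Define
\[
A_t=t+\int_{\mathbb{R}}L_t^x(W)\,\nu(dx)=t+\sum_{c\in A}\frac{1}{\mu_c}L_t^c(W).
\]
Since $\nu$ is locally finite and $x\mapsto L^x_t(W)$ is continuous with compact support, $A$ is finite, continuous, strictly increasing and satisfies $A_t\ge t$. Its inverse $T_t=A_t^{-1}$ is therefore continuous, strictly increasing and satisfies $T_t\le t$.

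Set $X_t=W_{T_t}$, which is a continuous local martingale with respect to $\mathscr{G}_t=\mathscr{F}^W_{T_t}$, with $\langle X\rangle_t=T_t$. The key identity is
\[
dT_t=\I_{\{X_t\notin A\}}\,dt.
\]
To prove it, note that $dA_s=ds+\sum_c\mu_c^{-1}dL^c_s(W)$. The measure $dL^c_s(W)$ lives on $\{W_s=c\}$, and $\{s:W_s\in A\}$ has Lebesgue measure zero because $A$ is countable. Hence $ds=\I_{\{W_s\notin A\}}dA_s$, and substituting $s=T_t$ gives the identity.

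Next, the occupation formula together with the time change gives $L^c_t(X)=L^c_{T_t}(W)$. This is the step I must check carefully, via Tanaka's formula applied to $W$ and time-changed; it is legitimate because $A$ is continuous and strictly increasing. Then
\[
\int_0^t\I_{\{X_s=c\}}ds=\int_0^{T_t}\I_{\{W_u=c\}}dA_u=\frac{1}{\mu_c}L^c_{T_t}(W),
\]
since $\{W_u=c\}$ carries neither Lebesgue measure nor the local times at other points $c'\neq c$. This is exactly the local-time condition.

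Finally, build $B$: on an enlarged space carrying an independent Brownian motion $\beta$, set
\[
B_t=\int_0^t\I_{\{X_s\notin A\}}dX_s+\int_0^t\I_{\{X_s\in A\}}d\beta_s.
\]
Its quadratic variation is $t$, so Lévy's characterization makes $B$ a Brownian motion. Moreover $\int_0^t\I_{\{X_s\in A\}}dX_s$ has quadratic variation $\int_0^t \I_{\{X_s\in A\}}\I_{\{X_s\notin A\}}ds=0$, hence vanishes, and so $X_t=x_0+\int_0^t\I_{\{X_s\notin A\}}dB_s$.

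\textbf{Uniqueness.} Let $(X,B)$ be any weak solution. Then $X$ is a continuous local martingale with $\langle X\rangle_t=\int_0^t\I_{\{X_s\notin A\}}ds=:T_t$. Using the local-time condition and countability of $A$,
\[
t=T_t+\sum_{c\in A}\int_0^t\I_{\{X_s=c\}}ds=T_t+\sum_{c}\frac{1}{\mu_c}L^c_t(X).
\]
The sum converges because $X$ is bounded on $[0,t]$ and $\nu$ is locally finite. From this I expect to show that $T$ is strictly increasing and $T_\infty=\infty$, and this is the step I expect to be the main obstacle.

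The sticky points in $A$ may accumulate, so I cannot argue interval by interval as in the case of a single sticky point. Strict increase means $X$ cannot spend a positive time interval inside $A$. The argument I would try first: $A$ is countable, so on any interval where $T$ is constant, $X$ is constant (its quadratic variation vanishes there) and sits at some $c\in A$. Then $L^c(X)$ is constant on that interval while $\int\I_{\{X=c\}}ds$ grows, contradicting the local-time condition.

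For $T_\infty=\infty$, argue on the event $\{T_\infty<\infty\}$. There $X$ converges to a limit, and the occupation formula makes $\int L^x_\infty(X)\,\nu(dx)$ finite. This contradicts $t=T_t+\sum_c\mu_c^{-1}L^c_t(X)\to\infty$.

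With these two facts in hand, Dambis--Dubins--Schwarz makes $W_u:=X_{T^{-1}_u}$ a Brownian motion from $x_0$, with $L^c_{T_t}(W)=L^c_t(X)$. The identity for $t$ then reads $t=A_{T_t}$, where $A$ is built from $W$ exactly as in the existence part, so $T=A^{-1}$ and $X_t=W_{A^{-1}_t}$. The right-hand side is a measurable functional of a Brownian motion, so the law of $X$ is uniquely determined.
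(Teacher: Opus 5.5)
Your existence part and the overall uniqueness scheme coincide with the paper's. Your argument that $T$ is strictly increasing is also correct: on an interval where $T$ is flat, $X$ sits at some $c\in A$, $L^c(X)$ is flat, but $\int\I_{\{X=c\}}ds$ grows. The paper avoids this step by using the right-continuous inverse $\widehat R$ and reading its continuity off $\widehat R_t=t+\sum_c\mu_c^{-1}L^c_t(x_0+B^1)$; either route works.

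The gap is in the step you flagged yourself, $T_\infty=\infty$. On $\{T_\infty<\infty\}$ you correctly get a compact range $K(\omega)$ for $X$. However, the claim that ``the occupation formula makes $\int L^x_\infty(X)\,\nu(dx)$ finite'' does not hold. The occupation formula only gives $\int_{\mathbb{R}}L^x_\infty(X)\,dx=T_\infty<\infty$, that is, control of $x\mapsto L^x_\infty(X)$ in $L^1(dx)$. This says nothing about its values at the countably many atoms of $\nu$, which is exactly what distinguishes the atomic case from $\nu(dx)=\rho(x)dx$. Since $\nu$ may have infinitely many atoms accumulating in $K(\omega)$, you need a uniform bound $\sup_{x\in K(\omega)}L^x_\infty(X)<\infty$ to combine with $\nu(K(\omega))<\infty$. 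Getting that bound requires continuity (or at least boundedness) of the local time in the space variable, uniformly up to time $T_\infty$.

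The paper supplies this in Lemma~\ref{lem:T_goes_to_infty}, in four steps.
\begin{enumerate}
\item By Dambis--Dubins--Schwarz on an enlarged space, which remains valid when $\langle X\rangle_\infty<\infty$, one has $X_t=x_0+W_{T_t}$ for a Brownian motion $W$.
\item Hence $L^c_t(X)=L^{c-x_0}_{T_t}(W)$ for all $c\in A$ and $t\ge0$, on a single event of full probability.
\item Joint continuity of Brownian local time gives $C(\omega)=\sup_{s\le T_\infty,\,y\in K(\omega)}L^{y-x_0}_s(W)<\infty$.
\item Therefore $t-T_t=\sum_{c\in A\cap K(\omega)}\mu_c^{-1}L^c_t(X)\le C(\omega)\,\nu(K(\omega))$, which contradicts $t-T_t\ge t-T_\infty\to\infty$.
\end{enumerate}
With this replacement your proof goes through.

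Your justification for the convergence of $\sum_c\mu_c^{-1}L^c_t(X)$ at finite $t$, namely that $X$ is bounded and $\nu$ is locally finite, likewise skips the needed uniform bound on $L^c_t(X)$. There it is harmless, because the local-time condition gives directly $\sum_c\mu_c^{-1}L^c_t(X)=\int_0^t\I_{\{X_s\in A\}}ds\le t$.
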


The existence part is proved by extending the Itô-McKean time-change
construction. More precisely, we introduce the additive functional
\[
R_{t}=t+\int_{\mathbb{R}}L_{t}^{c}(\tilde{W})\nu(dc)=t+\sum_{c\in A}\frac{1}{\mu_{c}}L_{t}^{c}(\tilde{W})
\]
and construct the solution by applying the inverse time change to
a Brownian motion $\tilde{W}$. Uniqueness in law is proved by reversing
this construction for an arbitrary weak solution.

In Section \ref{sec:dirichlet-identification}, we also establish
a connection between the process constructed here and the one constructed
in \cite{key-6} using Dirichlet-form approach. 

\section{Proof of the main result}

In this section we prove weak existence and uniqueness in law for
(\ref{eq:-3}). The proof follows the time-change approach of Engelbert
and Peskir \cite{key-7}, adapted to the present setting with a countable
set of sticky points.

\subsection{Weak existence \label{subsec:Formulas-for-local}}

We first prove the existence of a weak solution to (\ref{eq:-3}).
Let $\widetilde{B}$ be a standard Brownian motion defined on a probability
space $(\widetilde{\Omega},\widetilde{\mathscr{F}},\widetilde{\mathbb{P}})$,
and let $\widetilde{\mathbb{F}}$ denote its natural filtration. Set
\[
\widetilde{W}_{t}=x_{0}+\widetilde{B}_{t},\quad t\geq0.
\]
Consider the continuous increasing functional 
\[
R_{t}=t+\sum_{c\in A}\frac{1}{\mu_{c}}L_{t}^{c}(\widetilde{W})=t+\int_{\mathbb{R}}L_{t}^{c}(\widetilde{W})\nu(dc),
\]
where 
\[
\nu=\sum_{c\in A}\frac{1}{\mu_{c}}\delta_{c}.
\]
Since $\nu$ is locally finite and $c\mapsto L_{t}^{c}(\widetilde{W})$
has compact support for every fixed $t$, the integral is finite.
Moreover, $R$ is continuous and strictly increasing, and 
\[
R_{t}\geq t,
\]
so that $R_{t}\to\infty$ as $t\to\infty$.

Therefore its inverse 
\[
T_{t}:=R_{t}^{-1},\quad t\geq0,
\]
is well defined. We define the time-changed process 
\begin{equation}
Y_{t}=\widetilde{W}_{T_{t}}=x_{0}+\widetilde{B}_{T_{t}},\qquad t\geq0.\label{eq:-26}
\end{equation}

We first prove a lemma that will be used below.
\begin{lem}
\label{lem:Let--be-2} For every $t\geq0$, 
\[
\int_{0}^{t}\I_{\{\widetilde{W}_{s}\notin A\}}d\left(\int_{\mathbb{R}}L_{s}^{c}(\widetilde{W})\nu(dc)\right)=0
\]
almost surely.
\end{lem}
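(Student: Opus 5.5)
The plan is to reduce the statement to the classical fact that, for each fixed level $c$, the local time $L^{c}(\widetilde{W})$ increases only on the set $\{s:\widetilde{W}_{s}=c\}$, and then to sum over $c\in A$ using the local finiteness of $\nu$. Write
\[
\Lambda_{s}:=\int_{\mathbb{R}}L_{s}^{c}(\widetilde{W})\nu(dc)=\sum_{c\in A}\frac{1}{\mu_{c}}L_{s}^{c}(\widetilde{W}).
\]
The first step is to fix $t\geq0$ and observe that on $[0,t]$ the path $\widetilde{W}$ stays in the compact set $K_{t}=[\min_{s\le t}\widetilde{W}_{s},\max_{s\le t}\widetilde{W}_{s}]$. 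Outside $K_{t}$ the local times vanish on $[0,t]$, so only the atoms $c\in A\cap K_{t}$ contribute, and for these $\sum_{c\in A\cap K_{t}}\mu_{c}^{-1}<\infty$. Every summand $s\mapsto\mu_{c}^{-1}L_{s}^{c}(\widetilde{W})$ is continuous and nondecreasing. The series is dominated on $[0,t]$ by $\sum_{c\in A\cap K_{t}}\mu_{c}^{-1}\sup_{x}L_{t}^{x}(\widetilde{W})<\infty$, because a jointly continuous version of local time exists and is compactly supported in $x$. Hence the series converges uniformly on $[0,t]$. So $\Lambda$ is continuous and nondecreasing, and the Lebesgue--Stieltjes measure $d\Lambda$ on $[0,t]$ equals $\sum_{c}\mu_{c}^{-1}dL^{c}(\widetilde{W})$ as measures: the distribution functions agree, and the sum of countably many finite measures with finite total mass is a finite measure.

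The second step uses linearity and monotone convergence (all terms are nonnegative) to write
\[
\int_{0}^{t}\I_{\{\widetilde{W}_{s}\notin A\}}d\Lambda_{s}=\sum_{c\in A}\frac{1}{\mu_{c}}\int_{0}^{t}\I_{\{\widetilde{W}_{s}\notin A\}}dL_{s}^{c}(\widetilde{W}).
\]
For each $c\in A$ we have $\{\widetilde{W}_{s}\notin A\}\subset\{\widetilde{W}_{s}\neq c\}$. The standard support property of Brownian local time gives $\int_{0}^{t}\I_{\{\widetilde{W}_{s}\neq c\}}dL_{s}^{c}(\widetilde{W})=0$ almost surely, for instance from Tanaka's formula. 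Each term therefore vanishes on an event of probability one. Since $A$ is countable, the intersection of these events still has probability one, and on it the whole sum is zero.

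The main obstacle is the first step: identifying $d\Lambda$ with the countable sum of the measures $dL^{c}$ when $A$ may accumulate. Here I would rely on the jointly continuous version of $(x,s)\mapsto L_{s}^{x}(\widetilde{W})$, which gives the uniform bound, together with local finiteness of $\nu$ on the random compact set $K_{t}$. The interchange of sum and integral then follows from Tonelli's theorem for nonnegative integrands.
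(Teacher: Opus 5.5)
Your proof is correct and follows the same route as the paper: first the support property of $dL^{c}(\widetilde{W})$ on $\{s:\widetilde{W}_{s}=c\}$ for each $c\in A$, then a Tonelli/monotone-convergence interchange of the sum over $A$ with the Stieltjes integral in $s$. Your extra steps make explicit what the paper's one-line appeal to Tonelli leaves implicit: you identify $d\Lambda$ with $\sum_{c\in A}\mu_{c}^{-1}\,dL^{c}(\widetilde{W})$ using local finiteness of $\nu$ on $K_{t}$ and joint continuity of local time, and you intersect countably many null events.
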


\begin{proof}
For every $c\in A$, the measure $dL_{s}^{c}(\widetilde{W})$ is supported
on the set 
\[
\{s\geq0:\widetilde{W}_{s}=c\}.
\]
Therefore, 
\[
\int_{0}^{t}\I_{\{\widetilde{W}_{s}\notin A\}}dL_{s}^{c}(\widetilde{W})=0,\quad c\in A.
\]
Since all integrands are nonnegative, Tonelli's theorem yields 
\begin{align*}
 & \int_{0}^{t}\I_{\{\widetilde{W}_{s}\notin A\}}d\left(\int_{\mathbb{R}}L_{s}^{c}(\widetilde{W})\nu(dc)\right)=\int_{\mathbb{R}}\int_{0}^{t}\I_{\{\widetilde{W}_{s}\notin A\}}dL_{s}^{c}(\widetilde{W})\nu(dc)=0.
\end{align*}
This completes the proof of the statement.
\end{proof}
We now prove weak existence.
\begin{proof}[Proof of weak existence]
 We first show that 
\begin{equation}
\frac{1}{\mu_{c}}L_{t}^{c}(Y)=\int_{0}^{t}\I_{\{Y_{s}=c\}}ds,\qquad c\in A.\label{eq:occupation-Y}
\end{equation}

Since $\widetilde{B}$ is a continuous martingale and $T$ is a continuous
time change, $\widetilde{B}_{T}$ is a continuous martingale with
respect to the time-changed filtration $\widetilde{\mathbb{F}}_{T}$.
Moreover, since $A$ has Lebesgue measure zero, the occupation-time
formula implies 
\[
\int_{0}^{T_{t}}\I_{\{\widetilde{W}_{s}\in A\}}ds=0.
\]
Consequently, 
\begin{align}
\widetilde{B}_{T_{t}} & =\int_{0}^{T_{t}}\I_{\{\widetilde{W}_{s}\notin A\}}d\widetilde{B}_{s}=\int_{0}^{t}\I_{\{Y_{s}\notin A\}}d\widetilde{B}_{T_{s}}.\label{eq:-6-1}
\end{align}

Since $\langle\widetilde{B}_{T}\rangle_{t}=T_{t}$ and $R_{T_{t}}=t$,
Lemma \ref{lem:Let--be-2} gives 
\begin{align}
T_{t} & =\int_{0}^{T_{t}}\I_{\{\widetilde{W}_{s}\notin A\}}ds=\int_{0}^{T_{t}}\I_{\{\widetilde{W}_{s}\notin A\}}dR_{s}=\int_{0}^{t}\I_{\{Y_{s}\notin A\}}ds.\label{eq:-7-1}
\end{align}

For $c\in A$, using the change of variables associated with the continuous
increasing function $R$, we obtain 
\begin{align}
\int_{0}^{t}\I_{\{Y_{s}=c\}}ds & =\int_{0}^{T_{t}}\I_{\{\widetilde{W}_{s}=c\}}dR_{s}=\int_{0}^{T_{t}}\I_{\{\widetilde{W}_{s}=c\}}d\left(\int_{\mathbb{R}}L_{s}^{x}(\widetilde{W})\nu(dx)\right).\label{eq:equality_for_int_of_I}
\end{align}
Here the contribution of $ds$ vanishes because 
\[
\int_{0}^{T_{t}}\I_{\{\widetilde{W}_{s}=c\}}ds=0.
\]
Since $dL_{s}^{x}(\widetilde{W})$ is supported on $\{\widetilde{W}_{s}=x\}$,
only the atom $x=c$ contributes to the last integral in (\ref{eq:equality_for_int_of_I}).
Hence 
\[
\int_{0}^{t}\I_{\{Y_{s}=c\}}ds=\frac{1}{\mu_{c}}L_{T_{t}}^{c}(\widetilde{W}).
\]
By the time-change property of local time, 
\[
L_{t}^{c}(Y)=L_{T_{t}}^{c}(\widetilde{W}),
\]
and therefore (\ref{eq:occupation-Y}) follows.

It remains to construct a Brownian motion $B$ such that 
\[
Y_{t}=x_{0}+\int_{0}^{t}\I_{\{Y_{s}\notin A\}}dB_{s}.
\]

Let $B^{0}$ be a Brownian motion independent of $\widetilde{B}$,
defined on an auxiliary probability space. Passing to the corresponding
product probability space if necessary, define 
\begin{equation}
B_{t}=\widetilde{B}_{T_{t}}+\int_{0}^{t}\I_{\{Y_{s}\in A\}}dB_{s}^{0}.\label{eq:-8-1}
\end{equation}
Then $B$ is a continuous local martingale. Since the two martingale
terms in (\ref{eq:-8-1}) are independent, their quadratic covariation
is zero. By (\ref{eq:-7-1}), 
\begin{align*}
\langle B\rangle_{t} & =T_{t}+\int_{0}^{t}\I_{\{Y_{s}\in A\}}\,ds=\int_{0}^{t}\I_{\{Y_{s}\notin A\}}\,ds+\int_{0}^{t}\I_{\{Y_{s}\in A\}}\,ds=t.
\end{align*}
Hence, by Lévy's characterization theorem, $B$ is a Brownian motion.

Finally, by (\ref{eq:-6-1}) and (\ref{eq:-8-1}), 
\begin{align*}
Y_{t} & =x_{0}+\widetilde{B}_{T_{t}}=x_{0}+\int_{0}^{t}\I_{\{Y_{s}\notin A\}}\,d\widetilde{B}_{T_{s}}\\
 & =x_{0}+\int_{0}^{t}\I_{\{Y_{s}\notin A\}}\,dB_{s}.
\end{align*}
Together with (\ref{eq:occupation-Y}), this shows that $(Y,B)$ is
a weak solution to (\ref{eq:-3}).
\end{proof}
We next prove uniqueness in law.

\subsection{Uniqueness in law}

We now prove uniqueness in law. As in Engelbert and Peskir \cite{key-7},
the main idea is to reverse the time change used in the existence
proof.

Let $(X,B)$ be an arbitrary weak solution of (\ref{eq:-3}). Define
\begin{equation}
\widehat{T}_{t}:=\int_{0}^{t}\I_{\{X_{s}\notin A\}}ds,\quad t\geq0.\label{eq:-27-1}
\end{equation}
By (\ref{eq:-3}), 
\[
X_{t}=x_{0}+\int_{0}^{t}\I_{\{X_{s}\notin A\}}dB_{s},
\]
and hence 
\begin{equation}
\langle X\rangle_{t}=\widehat{T}_{t}.\label{eq:-10-1-1}
\end{equation}

We first show that $\widehat{T}$ goes to infinity.
\begin{lem}
\label{lem:T_goes_to_infty}Let $\widehat{T}$ be defined by (\ref{eq:-27-1}).
Then 
\[
\widehat{T}_{t}\to\infty\quad\text{as }t\to\infty
\]
almost surely.
\end{lem}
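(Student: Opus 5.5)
We argue by contradiction and use the occupation identity for $X$. Write $\widehat{T}_\infty:=\lim_{t\to\infty}\widehat{T}_t$, which exists in $[0,\infty]$ because $\widehat{T}$ is nondecreasing. On the event $\Omega_0:=\{\widehat{T}_\infty<\infty\}$ we will show that the time spent in $A$ must be infinite, while the local times stay bounded, which contradicts the local-time condition in (\ref{eq:-3}).

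\emph{Step 1: $X$ converges on $\Omega_0$.} Since $X-x_0$ is a continuous local martingale with $\langle X\rangle=\widehat{T}$ by (\ref{eq:-10-1-1}), it converges almost surely to a finite limit on $\{\langle X\rangle_\infty<\infty\}$. This is the standard Dambis--Dubins--Schwarz consequence. Hence on $\Omega_0$ the limit $X_\infty=\lim_{t\to\infty}X_t$ exists and is finite, and in particular $\sup_t|X_t|<\infty$.

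\emph{Step 2: local times are bounded on $\Omega_0$.} By Tanaka's formula,
\[
|X_t-c|=|x_0-c|+\int_0^t\operatorname{sgn}(X_s-c)\,dX_s+L_t^c(X).
\]
The stochastic integral has quadratic variation at most $\widehat{T}_t$, so it also converges on $\Omega_0$. Therefore $L_\infty^c(X)<\infty$ for every $c$ on $\Omega_0$. To control all $c\in A$ at once, fix $K=[-N,N]$ and work on $\Omega_0\cap\{\sup_t|X_t|\le N\}$. Applying the occupation-time formula to $\int_{\mathbb{R}}L_t^c(X)\,\nu(dc)$ is awkward, so instead sum the identities. By Tonelli and the local-time condition,
\[
\int_0^t\I_{\{X_s\in A\}}\,ds=\sum_{c\in A}\frac{1}{\mu_c}L_t^c(X)=\sum_{c\in A\cap K}\frac{1}{\mu_c}L_t^c(X).
\]
Here we use that $L^c(X)=0$ for $c\notin[-N,N]$ on this event. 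The right-hand side is at most $\nu(K)\sup_{c\in K}L_\infty^c(X)$. We need $\sup_{c}L_\infty^c(X)<\infty$, and we get it from the Tanaka formula uniformly in $c$. We have $L_t^c\le |X_t-c|+|x_0-c|+|M_t^c|$ with $M^c=\int\operatorname{sgn}(X-c)\,dX$. Uniform control of $M^c$ over $c$ is the delicate point. The cleanest route is the time change: on $\Omega_0$ write $X_t=x_0+\beta_{\widehat{T}_t}$ for a DDS Brownian motion $\beta$, possibly on an enlarged space. Then $L_t^c(X)=L^{c-x_0}_{\widehat{T}_t}(\beta)$, and this is bounded by $\sup_{a}L^a_{\widehat{T}_\infty}(\beta)<\infty$, since the local time of Brownian motion is jointly continuous with compact support in $a$ up to the finite time $\widehat{T}_\infty$.

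\emph{Step 3: contradiction.} We have $t=\widehat{T}_t+\int_0^t\I_{\{X_s\in A\}}\,ds$. On $\Omega_0\cap\{\sup|X|\le N\}$ the right-hand side is bounded uniformly in $t$ by $\widehat{T}_\infty+\nu([-N,N])\sup_aL^a_{\widehat{T}_\infty}(\beta)$. Letting $t\to\infty$ gives a contradiction, so each of these events is null. Taking the union over $N$ shows $\mathbb{P}(\Omega_0)=0$.

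The main obstacle is Step 2: we need the time-change identity $L^c(X)=L^{c-x_0}_{\widehat{T}}(\beta)$ for all $c$ simultaneously, together with the DDS representation on $\{\langle X\rangle_\infty<\infty\}$. This should follow from the same time-change property of local times already used in the existence proof.
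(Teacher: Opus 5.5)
Your proof is correct and follows essentially the same route as the paper. Both arguments use the Dambis--Dubins--Schwarz representation $X=x_0+\beta_{\widehat{T}}$, the time-change identity for local times (taken simultaneously over the countable set $A$), joint continuity of Brownian local time up to the finite time $\widehat{T}_\infty$, and local finiteness of $\nu$ to bound $t-\widehat{T}_t$ uniformly in $t$. The differences are cosmetic: you localize with deterministic intervals $[-N,N]$ and a countable union where the paper uses the random compact range $K(\omega)$, and the Tanaka-formula detour in your Step 2 is not needed.
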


\begin{proof}
Suppose, on the contrary, that the event 
\[
E:=\{\widehat{T}_{\infty}<\infty\}
\]
has positive probability.

Since $X-x_{0}$ is a continuous local martingale, the representation
theorem for continuous local martingales (see, e.g. \cite[Theorem V.1.7]{key-12})
implies that there exists a Brownian motion $W$ probably on an extended
probability space such that 
\begin{equation}
X_{t}=x_{0}+W_{\widehat{T}_{t}},\quad t\geq0,\label{eq:martingale-time-change}
\end{equation}
almost surely.

Fix $\omega\in E$ for which the representation (\ref{eq:martingale-time-change})
holds. Since $\widehat{T}_{\infty}(\omega)<\infty$ and the path of
$W$ is continuous, the set 
\[
K(\omega):=\left\{ x_{0}+W_{s}(\omega):0\leq s\leq\widehat{T}_{\infty}(\omega)\right\} 
\]
is a compact subset of $\mathbb{R}$. By (\ref{eq:martingale-time-change}),
\[
X_{t}(\omega)\in K(\omega)\qquad\text{for all }t\geq0.
\]

Since $A$ is countable, we have 
\[
\I_{\{X_{s}\in A\}}=\sum_{c\in A}\I_{\{X_{s}=c\}}.
\]
Therefore, using the local-time condition in (\ref{eq:-3}) and Tonelli's
theorem, 
\begin{align*}
t-\widehat{T}_{t} & =\int_{0}^{t}\I_{\{X_{s}\in A\}}ds=\sum_{c\in A}\int_{0}^{t}\I_{\{X_{s}=c\}}ds\\
 & =\sum_{c\in A}\frac{1}{\mu_{c}}L_{t}^{c}(X)=\int_{\mathbb{R}}L_{t}^{c}(X)\nu(dc).
\end{align*}

For the fixed $\omega\in E$, the local time $L_{t}^{c}(X)(\omega)$
vanishes for $c\notin K(\omega)$. Hence 
\begin{equation}
t-\widehat{T}_{t}(\omega)=\int_{K(\omega)}L_{t}^{c}(X)(\omega)\nu(dc).\label{eq:}
\end{equation}

By the time-change property of local time, for every fixed $c\in A$,
\begin{equation}
L_{t}^{c}(X)=L_{\widehat{T}_{t}}^{c-x_{0}}(W),\qquad t\geq0,\label{eq:time_change_equality}
\end{equation}
almost surely. Since $A$ is countable, there exists an event of probability
one on which (\ref{eq:time_change_equality}) holds simultaneously
for all $c\in A$ and all $t\geq0$. We may therefore assume that
the fixed $\omega\in E$ belongs to this event. 

Since $\widehat{T}_{t}(\omega)\leq\widehat{T}_{\infty}(\omega)<\infty$,
joint continuity of Brownian local time gives 
\[
C(\omega):=\sup_{\substack{0\leq s\leq\widehat{T}_{\infty}(\omega)}
}\sup_{y\in K(\omega)}L_{s}^{y-x_{0}}(W)(\omega)<\infty.
\]
Consequently, for every $c\in A\cap K(\omega)$ and $t\geq0$, 
\[
L_{t}^{c}(X)(\omega)=L_{\widehat{T}_{t}(\omega)}^{c-x_{0}}(W)(\omega)\leq C(\omega).
\]

Since $\nu$ is locally finite and $K(\omega)$ is compact, 
\[
\nu(K(\omega))<\infty.
\]
Therefore, using (\ref{eq:}), 
\begin{align*}
t-\widehat{T}_{t}(\omega) & =\int_{K(\omega)}L_{t}^{c}(X)(\omega)\nu(dc)\le C(\omega)\nu(K(\omega))<\infty.
\end{align*}
Hence 
\[
\sup_{t\geq0}\bigl(t-\widehat{T}_{t}(\omega)\bigr)<\infty.
\]

On the other hand, since $\widehat{T}_{t}(\omega)\leq\widehat{T}_{\infty}(\omega)<\infty$,
\[
t-\widehat{T}_{t}(\omega)\geq t-\widehat{T}_{\infty}(\omega)\to\infty\quad\text{as }t\to\infty,
\]
which is a contradiction. This completes the proof of the lemma.
\end{proof}
We now prove uniqueness in law.
\begin{proof}[Proof of uniqueness]
 By Lemma \ref{lem:T_goes_to_infty}, 
\[
\widehat{T}_{t}\to\infty\quad\text{as }t\to\infty
\]
almost surely. Therefore, its right-continuous inverse 
\[
\widehat{R}_{t}:=\inf\{s\geq0:\widehat{T}_{s}>t\},\qquad t\geq0,
\]
is finite for every $t\geq0$. The process $\widehat{R}$ is increasing
and right-continuous. Moreover, since $\widehat{T}$ is adapted and
continuous, $\widehat{R}_{t}$ is an $\mathbb{F}$-stopping time for
every $t\geq0$.

Consider the time-changed process 
\[
B_{t}^{1}:=X_{\widehat{R}_{t}}-x_{0},\quad t\geq0.
\]
We claim that 
\[
\langle B^{1}\rangle_{t}=t.
\]
Indeed, 
\begin{equation}
\langle B^{1}\rangle_{t}=\langle X\rangle_{\widehat{R}_{t}}=\widehat{T}_{\widehat{R}_{t}}=t.\label{eq:-11-1-1}
\end{equation}
Here, although $\widehat{T}$ need not be strictly increasing, it
is continuous. Hence, for its right-continuous inverse, 
\[
\widehat{T}_{\widehat{R}_{t}}=t.
\]
Moreover, if $\widehat{R}$ has a jump, then $\widehat{T}$ is constant
on the corresponding interval. Since 
\[
\langle X\rangle=\widehat{T},
\]
the continuous local martingale $X$ is constant on such intervals.
Consequently, $B^{1}=X_{\widehat{R}}-x_{0}$ is continuous.

It follows from Lévy's characterization theorem that $B^{1}$ is a
standard Brownian motion.

We next identify the inverse time change. By the definition of $\widehat{T}$
and the local-time condition in (\ref{eq:-3}), 
\begin{align*}
t & =\widehat{T}_{\widehat{R}_{t}}=\int_{0}^{\widehat{R}_{t}}\I_{\{X_{s}\notin A\}}ds=\widehat{R}_{t}-\int_{0}^{\widehat{R}_{t}}\I_{\{X_{s}\in A\}}\,ds\\
 & =\widehat{R}_{t}-\sum_{c\in A}\frac{1}{\mu_{c}}L_{\widehat{R}_{t}}^{c}(X).
\end{align*}
By the time-change property of local time, 
\[
L_{\widehat{R}_{t}}^{c}(X)=L_{t}^{c}(X_{\widehat{R}})=L_{t}^{c}(x_{0}+B^{1}),\quad c\in A.
\]
Since $A$ is countable, these equalities may be taken to hold simultaneously
for all $c\in A$ on an event of probability one. Therefore, 
\begin{equation}
\widehat{R}_{t}=t+\sum_{c\in A}\frac{1}{\mu_{c}}L_{t}^{c}(x_{0}+B^{1})=t+\int_{\mathbb{R}}L_{t}^{c}(x_{0}+B^{1})\nu(dc).\label{eq:R-uniqueness}
\end{equation}

In particular, $\widehat{R}$ is continuous and strictly increasing.
Hence $\widehat{T}$ is its inverse and 
\[
\widehat{R}_{\widehat{T}_{t}}=t,\quad t\geq0.
\]
Consequently, 
\[
X_{t}=X_{\widehat{R}_{\widehat{T}_{t}}}=x_{0}+B_{\widehat{T}_{t}}^{1}.
\]

The equality (\ref{eq:R-uniqueness}) shows that $\widehat{R}$ is
completely determined by the Brownian motion $B^{1}$ and $\widehat{T}$
is its inverse. Thus $X$ has the same law as the process obtained
from a standard Brownian motion by the time-change construction used
in the proof of weak existence. Therefore the law of $X$ is uniquely
determined. This proves uniqueness in law.
\end{proof}

\section{Identification with the Dirichlet-form construction}

\label{sec:dirichlet-identification}

We briefly recall the construction of sticky dynamics in \cite{key-6}
in dimension $d=1$. Let $A\subset\mathbb{R}$ be locally finite set,
i.e. $A\cap K$ is finite for every compact $K\subset\mathbb{R}$,
and let $\rho\in C^{1}(\mathbb{R})$ be strictly positive and bounded.
On $L^{2}(\mathbb{R},m_{\rho})$, where 
\[
m_{\rho}(dx)=\rho(x)\,dx+\sum_{c\in A}\rho(c)\delta_{c}(dx),
\]
consider the closure of the bilinear form 
\[
\mathcal{E}^{\rho}(f,g)=\frac{1}{2}\int_{\mathbb{R}}f'(x)g'(x)\rho(x)\,dx,\qquad f,g\in C_{c}^{\infty}(\mathbb{R}).
\]
We include the factor $1/2$ to match the normalization in (\ref{eq:-3}).
This corresponds to a deterministic time change that reduces the speed
of the process constructed in \cite[Theorem 5.2]{key-6} by a factor
of two.

The associated diffusion process $X$ satisfies, for every starting
point $x_{0}\in\mathbb{R}$ and a standard Brownian motion $B$ on
a possibly enlarged filtered probability space, 
\begin{equation}
X_{t}=x_{0}+\int_{0}^{t}\I_{\{X_{s}\notin A\}}\,dB_{s}+\frac{1}{2}\int_{0}^{t}\I_{\{X_{s}\notin A\}}(\log\rho)'(X_{s})\,ds,\quad t\ge0,\label{eq:dirichlet-sde}
\end{equation}
(see \cite[Proposition~3.10 and Theorems~5.2 and~5.20]{key-6}). The
following lemma identifies the case $\rho\equiv1$ with the process
constructed in this paper.
\begin{lem}
\label{lem:dirichlet-identification} Suppose that $\rho=1$. For
every $x_{0}\in\mathbb{R}$, the law of $X$, started at $x_{0}$,
on $C([0,\infty),\mathbb{R})$ coincides with the unique weak solution
of (\ref{eq:-3}) with $\mu_{c}=1$ for every $c\in A$. 
\end{lem}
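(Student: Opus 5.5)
The idea is to show that the Dirichlet-form process $X$ with $\rho\equiv1$ is itself a weak solution of (\ref{eq:-3}) with $\mu_c=1$. Theorem \ref{thm:main} then identifies its law with the unique law. With $\rho\equiv1$ we have $(\log\rho)'=0$, so (\ref{eq:dirichlet-sde}) reduces to
\[
X_t=x_0+\int_0^t\I_{\{X_s\notin A\}}\,dB_s .
\]
The first equation of (\ref{eq:-3}) therefore holds directly. Note that a locally finite $A$ with $\mu_c=1$ gives a locally finite measure $\nu=\sum_{c\in A}\delta_c$, so Theorem \ref{thm:main} applies. The remaining task is the local-time condition
\[
L^c_t(X)=\int_0^t\I_{\{X_s=c\}}\,ds,\qquad c\in A,
\]
and this is the main obstacle, since it is not part of the SDE (\ref{eq:dirichlet-sde}) as recalled.

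For this step I would first use the results of \cite{key-6}, which (in the cited Theorems~5.2 and~5.20) also contain the sticky-point condition, possibly normalized differently. That normalization has to be matched to the factor $1/2$ introduced here. If that is not available, I would derive the condition from the generator. The speed measure is $m=dx+\sum_{c\in A}\delta_c$ and the scale is natural, with the form $\tfrac12\int f'g'\,dx$. Integration by parts shows that the generator $L$ satisfies $Lf=\tfrac12 f''$ off $A$, together with the gluing condition $\tfrac12\bigl(f'(c+)-f'(c-)\bigr)=Lf(c)$ at each $c\in A$.

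Now take $f$ smooth off $A$, with $f''$ bounded and a kink at a fixed $c$. Apply Itô--Tanaka to $f(X)$ and compare with the Fukushima/martingale decomposition $f(X_t)-f(x_0)-\int_0^tLf(X_s)\,ds$. Matching the bounded-variation parts gives
\[
\tfrac12\bigl(f'(c+)-f'(c-)\bigr)L^c_t(X)=Lf(c)\int_0^t\I_{\{X_s=c\}}\,ds .
\]
Here the Tanaka convention matters: with the symmetric local time, $\tfrac12 L^c_t(X)\bigl(f'(c+)-f'(c-)\bigr)$ is the kink term, and it must match the convention in (\ref{eq:-3}). Choosing $f$ so that the kink term is nonzero, for instance $f(x)=|x-c|$ localized near $c$ using the local finiteness of $A$, yields $L^c_t(X)=\int_0^t\I_{\{X_s=c\}}ds$, provided the constants are tracked correctly.

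The same identity can be cross-checked by a time-change argument. $X$ is the Brownian motion $W$ time-changed by the inverse of the additive functional whose Revuz measure is $m$, namely $t+\sum_{c\in A}L^c_t(W)$. This is exactly the functional $R$ from the existence proof with $\mu_c=1$, which would identify the laws directly. Both routes reduce to this identification of the Revuz measure, and that is where I expect the care to be needed, chiefly the factor-of-two normalization.
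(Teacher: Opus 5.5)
Your main route is the paper's proof. You read off the generator $\tfrac12 f''$ off $A$ and $\tfrac12\bigl(f'(c+)-f'(c-)\bigr)$ at the atoms, apply the martingale property to a localized $|x-c|$, and compare with Tanaka's formula; the off-$A$ terms cancel because $d\langle X\rangle_s=\I_{\{X_s\notin A\}}\,ds$, and your constants are correct as written (the convention worry is moot, since $X$ is driftless and so its right, left and symmetric local times coincide). Your Revuz-measure cross-check, which identifies $X$ as $W$ time-changed by the inverse of $t+\sum_{c\in A}L^c_t(W)$, is also valid and matches $X$ directly with the existence construction, at the price of importing the Dirichlet-form time-change theorem instead of the generator results of \cite{key-6}.
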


\begin{proof}
When $\rho=1$, the drift in (\ref{eq:dirichlet-sde}) vanishes, so
$X$ satisfies the first equation of \ref{eq:-3}. By Theorem~\ref{thm:main},
it remains to verify the local-time condition at each $c\in A$.

By \cite[Theorems~4.9 and~5.16]{key-6}, after the same deterministic
time change, every $f\in C_{c}(\mathbb{R})\cap C_{b}^{2}(\mathbb{R}\setminus A)$
belongs to the generator domain and 
\[
f(X_{t})-f(X_{0})-\int_{0}^{t}\mathcal{L}f(X_{s})\,ds,\quad t\ge0,
\]
is a continuous local martingale for every starting point, where 
\begin{equation}
\mathcal{L}f(x)=\begin{cases}
\frac{1}{2}f''(x), & x\notin A,\\[2pt]
\frac{1}{2}\bigl(f'(x+)-f'(x-)\bigr), & x\in A.
\end{cases}\label{eq:dirichlet-generator}
\end{equation}
Here $C_{b}^{2}(\mathbb{R}\setminus A)$ means that the function and
its first two derivatives are continuous and bounded on $\mathbb{R}\setminus A$.

Fix $c\in A$. For an integer $n>\max\{|x_{0}|,|c|\}$, choose $\chi_{n}\in C_{c}^{\infty}(\mathbb{R})$
equal to one on a neighbourhood of $[-n,n]$, and set 
\[
f_{n,c}(x)=\chi_{n}(x)|x-c|,\qquad\tau_{n}=\inf\{t\geq0:|X_{t}|\geq n\}.
\]
The function $f_{n,c}$ belongs to the above class. On $(-n,n)$ its
second derivative vanishes away from $c$, and its first derivative
has a jump of size two at $c$ and no jump at any other point of $A$.
Thus $\mathcal{L}f_{n,c}=\I_{\{c\}}$ on $(-n,n)$, so 
\begin{equation}
|X_{t\wedge\tau_{n}}-c|-|x_{0}-c|-\int_{0}^{t\wedge\tau_{n}}\I_{\{X_{s}=c\}}\,ds,\quad t\ge0,\label{eq:dirichlet-kink-martingale}
\end{equation}
is a continuous local martingale. On the other hand, Tanaka's formula
gives 
\[
|X_{t\wedge\tau_{n}}-c|-|x_{0}-c|-L_{t\wedge\tau_{n}}^{c}(X)=\int_{0}^{t\wedge\tau_{n}}\mathrm{sgn}(X_{s}-c)\,dX_{s},\quad t\ge0,
\]
which is also a continuous local martingale by (\ref{eq:dirichlet-sde}).
Subtracting these two decompositions implies that 
\[
L_{t\wedge\tau_{n}}^{c}(X)-\int_{0}^{t\wedge\tau_{n}}\I_{\{X_{s}=c\}}\,ds,\quad t\ge0,
\]
is a continuous local martingale of finite variation starting at zero.
It therefore vanishes. Since $X$ is conservative and has continuous
paths, $\tau_{n}\uparrow\infty$ almost surely. Letting $n\to\infty$,
we obtain 
\begin{equation}
L_{t}^{c}(X)=\int_{0}^{t}\I_{\{X_{s}=c\}}\,ds,\quad t\geq0.\label{eq:dirichlet-local-time}
\end{equation}
As $A$ is countable, these identities hold simultaneously for all
$c\in A$ on an event of probability one. Equations (\ref{eq:dirichlet-sde})
and (\ref{eq:dirichlet-local-time}) implies that $X$ is a weak solution
of (\ref{eq:-3}) with $\mu_{c}=1$. The claimed equality in law follows
from Theorem~\ref{thm:main}. 
\end{proof}


\begin{thebibliography}{1}
\bibitem{key-12} D. Revuz and M. Yor. \textit{Continuous Martingales
and Brownian Motion}, Berlin: Springer, 1999.

\bibitem{key-7} Hans-Jürgen Engelbert and Goran Peskir. \textit{Stochastic
differential equations for sticky Brownian motion}, An International
Journal of Probability and Stochastic Processes, 2014.

\bibitem{key-11} K. Ito and H.P. McKean, Jr, \textit{Brownian motions
on a half line}, Illinois J. Math. 7, 1963.

\bibitem{key-6}Torben Fattler, Martin Grothaus, Nathalie Steil. \textit{Construction
of distorted Brownian motion with permeable sticky behaviour sets
with Lebesgue measure zero}, 2025, arXiv:2410.13814v2.

\bibitem{key-8} W. Feller. \textit{The parabolic differential equations
and the associated semi-groups of transformations}, Ann. of Math.
55, 1952.

\bibitem{key-9}W. Feller. \textit{Diffusion processes in one dimension},
Trans. Ann. Math. Soc. 77, 1954.

\bibitem{key-10}W. Feller. \textit{Generalized second order differential
operators and their lateral conditions}, Illinois J. Math. 1, 1957.

\end{thebibliography}
\end{document}